\documentclass[12pt]{article}

\usepackage{amssymb}
\usepackage{amsmath}
\usepackage{amsthm}
\usepackage{upref}
\usepackage{enumerate}
\usepackage{mathtools}

\usepackage[utf8]{inputenc}
\usepackage[english]{babel}

\usepackage[a4paper,margin=2.5cm]{geometry}
%

\theoremstyle{plain}
\newtheorem{theorem}{Theorem}[section]

\newtheorem{lemma}[theorem]{Lemma}
\newtheorem{corollary}[theorem]{Corollary}
\newtheorem*{theorem2}{Theorem}

\theoremstyle{definition}

\newtheorem{assumption}[theorem]{Assumption}
\newtheorem{remark}[theorem]{Remark}

\numberwithin{equation}{section}

\newcommand{\abs}[1]{\lvert{#1}\rvert}

\newcommand{\norm}[1]{\lVert{#1}\rVert}
\newcommand{\normb}[1]{\bigl\lVert{#1}\bigr\rVert}

\newcommand{\ip}[2]{\langle{#1},{#2}\rangle}

\DeclareMathOperator{\ran}{Ran}

\DeclareMathOperator{\supp}{supp}


\newcommand{\vp}{\varphi}
\newcommand{\bR}{\mathbf{R}}
\newcommand{\bC}{\mathbf{C}}

\newcommand{\cH}{\mathcal{H}}
\newcommand{\cD}{\mathcal{D}}
\newcommand{\cO}{\mathcal{O}}

\newcommand{\cB}{\mathcal{B}}

\newcommand{\bZ}{\mathbf{Z}}

\newcommand{\bN}{\mathbf{N}}

\newcommand{\e}{{\rm e}}
\newcommand{\I}{{\rm i}}
\newcommand{\di}{\,{\rm d}}



\newcommand{\cHro}{\mathcal{H}^{\textup{ro}}}
\newcommand{\cHre}{\mathcal{H}^{\textup{re}}}
\newcommand{\cHroh}{\mathcal{H}^{\textup{ro}}_h}
\newcommand{\cHreh}{\mathcal{H}^{\textup{re}}_h}
\newcommand{\rfl}[1]{\breve{#1}}

\newcommand{\even}{\mathcal{E}}
\newcommand{\odd}{\mathcal{O}}
\newcommand{\R}{\mathcal{R}}
\newcommand{\DL}{H_0^{\textup{D}}}
\newcommand{\NL}{H_0^{\textup{N}}}
\newcommand{\DLh}{H_{0,h}^{\textup{D}}}
\newcommand{\NLh}{H_{0,h}^{\textup{N}}}
\newcommand{\DLp}{H^{\textup{D}}}
\newcommand{\NLp}{H^{\textup{N}}}
\newcommand{\DLhp}{H_h^{\textup{D}}}
\newcommand{\NLhp}{H_h^{\textup{N}}}

\newcommand{\Jeven}{J_h^{\textup{re}}}
\newcommand{\Jodd}{J_h^{\textup{ro}}}
\newcommand{\Keven}{K_h^{\textup{re}}}
\newcommand{\Kodd}{K_h^{\textup{ro}}}

\long\def\MSC#1\EndMSC{\def\arg{#1}\ifx\arg\empty\relax\else
	{\narrower\noindent%
		\small\textbf{2020 Mathematics Subject Classification:} #1} \fi}
\long\def\KEY#1\EndKEY{\def\arg{#1}\ifx\arg\empty\relax\else
	{\narrower\noindent%
		\small\textbf{Keywords:} #1\\}\fi}

\begin{document}
\allowdisplaybreaks
\title{Discrete approximations to Dirichlet and Neumann Laplacians on a half-space and norm resolvent convergence}

\author{
Horia Cornean\footnote{Department of Mathematical Sciences, Aalborg University, Skjernvej 4A, 
DK-9220 Aalborg \O{}, Denmark. Email: cornean@math.aau.dk and matarne@math.aau.dk},\;
Henrik Garde\footnote{Department of Mathematics, Aarhus University, Ny Munkegade 118, DK-8000 Aarhus C, Denmark. Email: garde@math.au.dk},\; and
Arne Jensen\footnotemark[1]
}
\date{}
\maketitle

\begin{abstract}
	\noindent We extend recent results on discrete approximations of the Laplacian in $\bR^d$ with norm resolvent convergence to the corresponding results for Dirichlet and Neumann Laplacians on a half-space. The resolvents of the discrete Dirichlet/Neumann Laplacians are embedded into the continuum using natural discretization and embedding operators. Norm resolvent convergence to their continuous counterparts is proven with a quadratic rate in the mesh size. These results generalize with a limited rate to also include operators with a real, bounded, and H\"older continuous potential, as well as certain functions of the Dirichlet/Neumann Laplacians, including any positive real power.
\end{abstract}

\KEY
norm resolvent convergence, Dirichlet Laplacian, Neumann Laplacian, lattice.
\EndKEY

\MSC
47A10, 
47A58, 
47B39
.
\EndMSC

\section{Introduction}

Let $\DL$ be the Dirichlet Laplacian and let $\NL$ be the Neumann Laplacian on the half-space $\bR^d_+$, and let $\cH^+ = L^2(\bR^d_+)$. Let $\DLh$ and $\NLh$ be the standard finite difference discretizations of $\DL$ and $\NL$, defined on $\cH_h^+ = \ell^2(h\bZ^d_+)$ with a mesh size $h>0$; see section \ref{sec:laplacians} for the precise definitions. 

Using suitable embedding operators $\Jodd, \Jeven: \cH_h^+\to\cH^+$ and discretization operators $\Kodd,\Keven: \cH^+\to \cH_h^+$ (see section~\ref{sec:embed}), we prove the following type of norm resolvent convergence with an explicit rate in the mesh size.
\begin{theorem2}
	Let $K\subset\bC\setminus[0,\infty)$ be compact. Then there exists $C>0$ such that
	\begin{equation*}
		\norm{\Jodd(\DLh-zI^+_h)^{-1}\Kodd
			-(\DL-zI^+)^{-1}}_{\cB(\cH^+)}\leq C h^2, 
	\end{equation*}
	and
	\begin{equation*}
		\norm{\Jeven(\NLh-zI^+_h)^{-1}\Keven
			-(\NL-zI^+)^{-1}}_{\cB(\cH^+)}\leq C h^2, 
	\end{equation*}
	for $0<h\leq1$ and $z\in K$.
\end{theorem2}
Norm resolvent convergence was first shown for discrete approximations of the Laplacian on $\bR^d$ in \cite{NT} and was extended to classes of Fourier multipliers in \cite{CGJ}. Recently norm resolvent convergence of discrete approximations to other operators have been considered as well, such as discrete Dirac operators in \cite{CGJ2} and quantum graph Hamiltonians in \cite{Exner}.

We prove the above result as Theorem~\ref{thm1} in section~\ref{sec:results}, and also prove several extensions to this result. In section~\ref{sec:potential} we add a real, bounded, and Hölder continuous potential $V$ to $\DL$ and $\NL$, and add a discrete potential $V_h(n) = V(hn)$ with $n\in\bZ^d_+$ to $\DLh$ and $\NLh$. The norm resolvent estimates with a potential are given in Theorem~\ref{thm:potential} with a rate that now depends explicitly on the H\"older exponent for $V$. Such norm resolvent convergence implies much improved spectral results compared to e.g.\ strong resolvent convergence. This includes convergence of the spectrum in a local Hausdorff distance \cite[Section~5]{CGJ}.

Finally in section~\ref{sec:functionalcalc} we prove norm resolvent estimates between $\Psi(\DLh)$ and $\Psi(\DL)$, and between $\Psi(\NLh)$ and $\Psi(\NL)$, defined via the functional calculus for certain functions $\Psi$ that have also been considered in \cite{CGJ} for estimates on the full space $\bR^d$. The results are given in Theorem \ref{thm2}. As an example, this includes $\Psi(\lambda) = \lambda^{s/2}$ for any positive real power $s$. This example leads to norm resolvent estimates with a rate of $h^{\min\{s,2\}}$. Fractional Laplacians on a half-space (or general domains) with Dirichlet and Neumann boundary conditions have been considered by several authors. See e.g.~\cite{A,Ghosh,GG21,GG22} for some recent results. However, results are scarce for discrete approximations of such operators.

\section{Preliminaries}\label{sect4}
We give the results in dimensions $d\geq2$. The case $d=1$ is obtained by a simple mod\-i\-fi\-ca\-tion of the arguments below.

Let $d\geq2$ and $d'=d-1$. For $x\in\bR^d$ we write $x=(x_1,x')$ with $x_1\in\bR$ and $x'\in\bR^{d'}$.
The half-space is denoted by $\bR_+^{d}=(0,\infty)\times\bR^{d'}$. 
For $x=(x_1,x')\in \bR_+^{d}$ the reflection of $x$ in the hyperplane
$\{0\}\times \bR^{d'}$ is denoted by 
\begin{equation*}
	\rfl{x}=(-x_1,x').
\end{equation*}

For $n\in\bZ^d$ we write $n=(n_1,n')$ with $n_1\in\bZ$ and $n'\in\bZ^{d'}$. We write 
\begin{equation*}
	\bZ^d_+=\{n\in\bZ^d\mid n_1\geq1\}
\end{equation*}
for the discrete half-space. We denote the reflection in the discrete hyperplane $\{0\}\times \bZ^{d'}$ by
\begin{equation*}
	\rfl{n}=(-n_1,n').
\end{equation*}

\subsection{Extension and restriction operators}
The continuous Hilbert spaces are denoted by
\begin{equation*}
\cH=L^2(\bR^d)\quad\text{and}\quad \cH^+ = L^2(\bR^d_+).
\end{equation*}
In analogy with the even-odd decomposition of functions in dimension one we introduce the reflection-even and reflection-odd functions in $\cH$ by defining
\begin{equation*}
\cHre=\{f\in\cH\mid f(x)=f(\rfl{x}),\;x\in\bR^d\}
\end{equation*}
and
\begin{equation*}
\cHro=\{f\in\cH\mid f(x)=-f(\rfl{x}),\;x\in\bR^d\},
\end{equation*}
such that $\cH=\cHre\oplus \cHro$ as an orthogonal direct sum.

The discrete Hilbert spaces are given by
\begin{equation*}
\cH_h=\ell^2(h\bZ^d)\quad\text{and}\quad \cH_h^+=\ell^2(h\bZ^d_+)
\end{equation*}
with norms
\begin{equation*}
\norm{v_h}^2_{\cH_h}=h^d\sum_{n\in\bZ^d}\abs{v_h(n)}^2
\quad\text{and}\quad
\norm{u_h}^2_{\cH_h^+}=h^d\sum_{n\in\bZ^d_+}\abs{u_h(n)}^2.
\end{equation*}
Notice that we use the index $n\in \bZ^d$ and $n\in\bZ_+^d$ in the notation for $v_h\in \cH_h$ and $u_h\in \cH_h^+$. The dependence on the mesh size is given by the subscript $h$.

The reflection-even and reflection-odd sequences are defined by
\begin{equation*}
\cHreh=\{v_h\in\cH_h\mid v_h(n)=v_h(\rfl{n}),\;n\in\bZ^d\}
\end{equation*}
and
\begin{equation*}
\cHro_h=\{v_h\in\cH_h\mid v_h(n)=-v_h(\rfl{n}),\;n\in\bZ^d\}.
\end{equation*}
We have $\cH_h=\cHre_h\oplus \cHro_h$ as an orthogonal direct sum.

The reflection-odd extension operator $\odd\colon\cH^+\to\cH$ and reflection-even extension operator $\even\colon\cH^+ \to \cH$ are given by
\begin{equation*}
	\odd f(x)=\begin{dcases}
f(x), & x\in\bR^d_+,\\
-f(\rfl{x}), & \rfl{x}\in\bR^d_+,
\end{dcases} 
	\qquad 
	\even f(x)=\begin{dcases}
f(x), & x\in\bR^d_+,\\
f(\rfl{x}), & \rfl{x}\in\bR^d_+.
\end{dcases}
\end{equation*}
In the discrete case the reflection-odd extension operator $\odd_h\colon \cH^+_h\to\cH_h$ is given by
\begin{equation*}
	\odd_h u_h(n)=\begin{dcases}
u_h(n), & n\in\bZ^d_+,\\
0, & \text{$n_1=0$, $n'\in\bZ^{d'}$,}\\
-u_h(\rfl{n}), & \rfl{n}\in\bZ^d_+.
\end{dcases}
\end{equation*}
The discrete reflection-even extension operator 
$\even_h\colon\cH_h^+\to\cH_h$ is defined by
\begin{equation*}
	\even_h u_h(n)=\begin{dcases}
u_h(n), & n\in\bZ^d_+,\\
u_h(1,n'), & \text{$n_1=0$, $n'\in\bZ^{d'}$,}\\
u_h(\rfl{n}), & \rfl{n}\in\bZ^d_+.
\end{dcases}
\end{equation*}

The natural restriction operators onto the half-spaces are denoted by
\begin{equation*}
	\R\colon \cH\to\cH^+\quad\text{and}\quad
\R_h\colon \cH_h\to\cH^+_h.
\end{equation*}
Obviously we have $\R\odd = \R\even = I^+$ and $\R_h\odd_h = \R_h\even_h=I^+_h$, where we also introduced the notation for the identity operators on $\cH^+$ and $\cH^+_h$, respectively.

\subsection{Embedding and discretization operators} \label{sec:embed}

In~\cite{CGJ} embedding and discretization operators were defined using a pair of biorthogonal Riesz sequences. Here we consider only the special case of an orthogonal sequence, as in~\cite{NT}, but with the additional assumption that the generating function is reflection-even.

\begin{assumption}\label{assumpd}
	Assume $\vp_0\in \cHre$ such that $\{\vp_0(\,\cdot\,-n)\}_{n\in\bZ^d}$ is an orthonormal sequence in $\cH$.
\end{assumption}
Define 
\begin{equation*}
	\vp_{h,n}(x)=\vp_0((x-hn)/h), \quad h>0, \: n\in\bZ^d,\; x\in\bR^d.
\end{equation*}
Since $\vp_0$ is assumed reflection-even we have the 
important property
\begin{equation}\label{symd}
	\vp_{h,n}(\rfl{x})=\vp_{h,\rfl{n}}(x), \quad h>0, \: 
n\in\bZ^d,\; x\in\bR^d.
\end{equation}
Define the embedding operators $J_h\colon\cH_h\to\cH$ by
\begin{equation*}
	J_hv_h(x)=\sum_{n\in\bZ^d}v_h(n)\vp_{h,n}(x),\quad v_h\in \cH_h.
\end{equation*}
From Assumption~\ref{assumpd} it follows that $\{h^{-d/2}\vp_{h,n}\}_{n\in\bZ^d}$
is an orthonormal sequence, hence that $J_h$ is isometric.

The discretization operators are given by $K_h=(J_h)^{\ast}$. With the convention that inner products are linear in the second entrance, we explicitly have
\begin{equation*}
	K_hg(n)=\frac{1}{h^d}\ip{\vp_{h,n}}{g}_{\cH},\quad g\in\cH.
\end{equation*}
Let us note that~\eqref{symd} implies $J_h\cHroh\subseteq\cHro$, $J_h\cHreh\subseteq \cHre$, $K_h\cHro\subseteq\cHroh$, and $K_h\cHre\subseteq\cHreh$.

The half-space embedding operators $\Jodd, \Jeven\colon\cH^+_h\to\cH^+$ are defined as
\begin{equation*}
	\Jodd = \R J_h \odd_h, \qquad \Jeven = \R J_h \even_h.
\end{equation*}
The operators $\Jodd$ and $\Jeven$ are isometric, as can be seen from the following computation. Let $u_h\in\cH_h^+$ and use that $J_h\odd_h u_h\in\cHro$,
\begin{equation*}
	\norm{\Jodd u_h}^2_{\cH^+}=\tfrac{1}{2}\norm{J_h\odd_h u_h}^2_{\cH}
=\tfrac{1}{2}\norm{\odd_h u_h}^2_{\cH_h}=
\norm{u_h}^2_{\cH_h^+}.
\end{equation*}
A similar computation holds for $\Jeven$.

The half-space discretization operators
$\Kodd,\Keven\colon\cH^+\to\cH_h^+$
are defined as 
\begin{equation*}
	\Kodd = \R_h K_h\odd, \qquad \Keven = \R_h K_h\even.
\end{equation*}
Note that $\Kodd\Jodd = \Keven\Jeven=I^+_h$. $\Jodd \Kodd$ is the orthogonal projection onto $\ran \Jodd$ in $\cH^+$ and $\Jeven\Keven$ is the orthogonal projection onto $\ran \Jeven$ in $\cH^+$.

\subsection{Laplacians} \label{sec:laplacians}

Let $H_0=-\Delta$ be the Laplacian in $\cH$ with domain $\cD(H_0)=H^2(\bR^d)$. 

The Dirichlet Laplacian $\DL$ on $\cH^+$ is defined as the positive self-adjoint operator given by the Friedrichs extension of $-\Delta|_{C_0^\infty(\bR_+^d)}$. Equivalently, $\DL$ is the variational operator associated with the triple $(\cH^+,H_0^1(\bR_+^d), q)$, where the sesquilinear form $q$ is
\begin{equation*}
	q(u,v) = \int_{\bR_+^d} \overline{\nabla u}\cdot \nabla v\,\di x.
\end{equation*} 
By \cite[Theorem~9.11]{Grubb} the domain of $\DL$ on a half-space simplifies to
\begin{equation*}
	\cD(\DL) = H^2(\bR^d_+)\cap H_0^1(\bR^d_+) = \{u\in H^2(\bR^d_+) \mid \gamma_0 u = 0\},
\end{equation*}
where $\gamma_0$ is the Dirichlet trace operator.

Next we define the Neumann Laplacian $\NL$ on $\cH^+$ as the positive self-adjoint variational operator associated with the triple $(\cH^+, H^1(\bR^d_+),q)$. On a half-space its domain simplifies via \cite[Theorem~9.20]{Grubb} to
\begin{equation*}
	\cD(\NL) = \{u\in H^2(\bR^d_+) \mid \gamma_1 u = 0\},
\end{equation*}
where $\gamma_1$ is the Neumann trace operator.

From \cite[Theorem~9.2]{Grubb} the trace maps satisfy $\gamma_j\in\cB(H^m(\bR^d_+),H^{m-j-\frac{1}{2}}(\partial\bR^d_+))$ for $m\in \bN$ and $j\leq m-1$.

We need the following lemma. The result is a consequence of e.g.~\cite[Proposition 2.2]{MS}. We give a shorter proof for the sake of completeness.

\begin{lemma}\label{lemma22} {}\
	\begin{enumerate}[\rm(i)]
		\item Let $f\in \cD(\DL)$. Then $\odd f\in\cD(H_0)$ and $\odd \DL f = H_0 \odd f$. Furthermore, for $z\in\bC\setminus[0,\infty)$ and all $g\in\cH^+$ we have 
		\begin{equation}\label{interd}
			\odd (\DL-zI^+)^{-1}g=(H_0-zI)^{-1}\odd g.
		\end{equation}
		\item Let $f\in \cD(\NL)$. Then $\even f\in\cD(H_0)$ and $\even \NL f = H_0 \even f$. Furthermore, for $z\in\bC\setminus[0,\infty)$ and all $g\in\cH^+$ we have 
		\begin{equation}\label{intern}
			\even (\NL-zI^+)^{-1}g=(H_0-zI)^{-1}\even g.
		\end{equation}
	\end{enumerate}
\end{lemma}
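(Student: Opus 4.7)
The plan is first to prove the two intertwining relations $\odd \DL f = H_0 \odd f$ for $f \in \cD(\DL)$ and $\even \NL f = H_0 \even f$ for $f \in \cD(\NL)$ by direct distributional computation, and then to deduce the resolvent identities \eqref{interd} and \eqref{intern} by applying these intertwinings with $f$ being the relevant half-space resolvent applied to $g$.

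For (i), I would pick $f \in \cD(\DL) \subset H^2(\bR^d_+)$, with $\gamma_0 f = 0$, and verify that $\odd f \in H^2(\bR^d) = \cD(H_0)$ by computing distributional derivatives. For the tangential directions $x_j$ with $j \geq 2$ the reflection symmetry yields $\partial_{x_j}(\odd f) = \odd(\partial_{x_j} f) \in L^2(\bR^d)$ with no boundary contribution. For $\partial_{x_1}$, pairing with a test function $\phi \in C_c^\infty(\bR^d)$, performing the change of variables $y = \rfl{x}$ on the region $\{x_1 < 0\}$, and integrating by parts on $\bR^d_+$, one obtains a single boundary term proportional to $\int_{\bR^{d'}} \gamma_0 f(x')\phi(0,x')\,dx'$, which vanishes by the Dirichlet condition. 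This gives $\partial_{x_1}(\odd f) = \even(\partial_{x_1} f) \in L^2(\bR^d)$. Repeating the computation to obtain $\partial_{x_1}^2(\odd f)$ introduces no further boundary contribution, since $\even(\partial_{x_1} f)$ has matching traces from both sides of $\{x_1=0\}$, and produces $\partial_{x_1}^2(\odd f) = \odd(\partial_{x_1}^2 f)$. Hence $\odd f \in H^2(\bR^d)$, and comparing pointwise on $\bR^d \setminus \{x_1 = 0\}$ gives $H_0 \odd f = \odd \DL f$.

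For (ii), the same procedure applied to $\even f$ with $f \in \cD(\NL)$ yields $\partial_{x_1}(\even f) = \odd(\partial_{x_1} f) \in L^2$ automatically, since $\even f$ has no jump at $\{x_1 = 0\}$. The second differentiation needed to form $\partial_{x_1}^2(\even f)$ produces a boundary term proportional to $\int_{\bR^{d'}} \gamma_0(\partial_{x_1} f)(x')\phi(0,x')\,dx' = \int_{\bR^{d'}} \gamma_1 f(x')\phi(0,x')\,dx'$, which vanishes by the Neumann condition. Hence $\even f \in H^2(\bR^d)$ and $H_0 \even f = \even \NL f$.

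Finally, for $g \in \cH^+$ and $z \in \bC \setminus [0,\infty)$, both $\DL - zI^+$ and $H_0 - zI$ are invertible. Setting $f = (\DL - zI^+)^{-1}g \in \cD(\DL)$ and applying $\odd$ together with the intertwining relation gives $(H_0 - zI)\odd f = \odd(\DL - zI^+)f = \odd g$, and inverting $H_0 - zI$ yields \eqref{interd}. The Neumann case \eqref{intern} is identical. The main subtlety will be organizing the distributional derivative calculations so that the Dirichlet condition is used precisely at the first normal derivative step in (i) and the Neumann condition at the second normal derivative step in (ii); everything else is routine bookkeeping.
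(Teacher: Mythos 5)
Your proposal is correct, and it reaches the lemma by a route that genuinely differs from the paper's. The paper proves (i) by approximation: it picks $\psi_n\in C_0^{\infty}(\bR^d_+)$ converging to $f$ in the graph norm, uses $-\Delta\odd\psi_n=\odd\DL\psi_n$ (the support of $\psi_n$ stays away from the hyperplane), and concludes by closedness of $H_0$; for (ii) it first observes $\even f\in H^1(\bR^d)$, gets $\partial_1\even f=\odd\partial_1 f$ via smooth approximants with vanishing Neumann trace, and then exploits that $\partial_1 f\in H^1_0(\bR^d_+)$ admits an $H^1$ zero-extension to conclude $\even f\in H^2(\bR^d)$. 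You instead compute the distributional derivatives of $\odd f$ and $\even f$ directly, so that the boundary term carries $\gamma_0 f$ in the Dirichlet case (at the first normal derivative) and $\gamma_1 f$ in the Neumann case (at the second); the passage to the resolvent identities \eqref{interd} and \eqref{intern} is then exactly the paper's, via $\cD(\DL)=\ran\bigl((\DL-zI^+)^{-1}\bigr)$ and invertibility of $H_0-zI$. Your route is longer in bookkeeping but treats the two cases symmetrically, makes transparent precisely where each boundary condition enters, and avoids any core/approximation argument; the latter is an actual advantage in the Dirichlet case, since $C_0^{\infty}(\bR^d_+)$ is a form core but not an operator core for $\DL$ (its graph-norm closure is $H^2_0(\bR^d_+)$, strictly smaller than $H^2(\bR^d_+)\cap H^1_0(\bR^d_+)$), so the paper's approximation step really requires smooth functions vanishing only to first order on the boundary rather than compactly supported ones. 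Conversely, the paper's argument for (ii) is shorter because it leans on standard trace/extension facts instead of explicit boundary terms. The points you leave implicit --- the mixed second derivatives $\partial_j\partial_1(\odd f)=\even(\partial_j\partial_1 f)$ for $j\geq 2$ (and similarly in the even case), and the Gauss--Green formula for $H^1$ functions on the half-space paired with test functions --- are indeed routine and do not hide any gap.
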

\begin{proof}
	(i): Let $f\in\cD(\DL)$.
	Since $C_0^{\infty}(\bR_+^d)$ is a core for $\DL$, we can 
	find a sequence $\psi_n\in C_0^{\infty}(\bR_+^d)$ such that
	$\psi_n\to f$ and $\DL\psi_n\to \DL f$ in $\cH^+$, as $n\to\infty$. 
	We have $\odd\psi_n\in C_0^{\infty}(\bR^d)$, such that $\odd\psi_n\to\odd f$ and $\odd \DL\psi_n\to \odd \DL f$ in $\cH$, as $n\to\infty$. Note that
	\begin{equation*}
		\odd \DL\psi_n=\odd(-\Delta\psi_n)=-\Delta\odd\psi_n,
	\end{equation*}
	since $-\Delta$ commutes with orthogonal coordinate transformations and since $\psi_n$ is supported in $\bR^d_+$, i.e.\ away from the hyperplane $\{0\}\times\bR^{d'}$. Thus
	\begin{equation*}
		\odd\psi_n\to \odd f \quad \text{and} \quad -\Delta\odd\psi_n\to \odd \DL f
	\end{equation*}
	in $\cH$. Since $H_0$ is a closed operator we conclude that
	$\odd f\in\cD(H_0)$ and $H_0\odd f=\odd \DL f$. The second part of the statement follows by using $\cD(\DL)=\ran((\DL-zI^+)^{-1})$ for $z\in\bC\setminus[0,\infty)$.
	
	(ii): Let $f\in \cD(\NL)$. Restrictions of $\even f$ to either side of $\{0\}\times \bR^{d'}$ has coinciding Dirichlet and Neumann traces, so at least $\even f \in H^1(\bR^d)$. We can approximate $f$ in $H^1(\bR^d_+)$ by a sequence $\psi_n\in C^\infty(\overline{\bR^d_+})$ with $\gamma_1\psi_n = 0$. Now $\even \psi_n \in C^1(\bR^d)$ implies the identity 
	\begin{equation*}
		\partial_1 \even f = \odd \partial_1 f.
	\end{equation*}
	However we have that $\partial_1 f\in H_0^1(\bR^d_+)$, which has a zero-extension $E_0(\partial_1 f)\in H^1(\bR^d)$. Since
	\begin{equation*}
		\partial_1\even f(x) = \odd\partial_1 f(x) = E_0(\partial_1 f)(x) - E_0(\partial_1 f)(\rfl{x}),
	\end{equation*}
	then $\partial_1\even f \in H^1(\bR^d)$ and as a consequence $\even f\in H^2(\bR^d) = \cD(H_0)$, since there was no contention regarding the square integrability of the other partial derivatives. The rest of the proof follows by using that $-\Delta$ on $H^2(\bR^d)$ commutes with orthogonal coordinate transformations, and that $\cD(\NL)=\ran((\NL-zI^+)^{-1})$ for $z\in\bC\setminus[0,\infty)$.
\end{proof}


The discrete Laplacian $H_{0,h}$ on $\cH_h$ is given by
\begin{equation*}
H_{0,h}v_h(n)=\frac{1}{h^2}
\sum_{j=1}^d\bigl(2v_h(n)-v_h(n+e_j)-v_h(n-e_j)\bigr),
\quad v_h\in\cH_h,\;n\in\bZ^d.
\end{equation*}
Here $\{e_j\}_{j=1}^d$ denotes the canonical basis for $\bR^d$.
The discrete Dirichlet Laplacian on $\cH_h^+$ is given by
\begin{equation*}
\DLh v_h(n)=
\begin{dcases}
\frac{1}{h^2}\sum_{j=2}^{d}\bigl(2v_h(n)-v_h(n+e_j)-v_h(n-e_j)\bigr)
& \\
\quad+
\frac{1}{h^2}\bigl(2v_h(n)-v_h(n+e_1)\bigr), & n_1=1,\\
\frac{1}{h^2}
\sum_{j=1}^d\bigl(2v_h(n)-v_h(n+e_j)-v_h(n-e_j)\bigr), & n_1\geq2.
\end{dcases}
\end{equation*}
Let $u_h\in\cH_h^+$. Then using the definitions
one can verify that $\odd_h \DLh u_h=H_{0,h}\odd_h u_h$ and then
\begin{equation}\label{interh}
\odd_h (\DLh-zI^+_h)^{-1}u_h=(H_{0,h}-zI_h)^{-1}\odd_h u_h.
\end{equation}
The discrete Neumann Laplacian on $\cH_h^+$ is given by
\begin{equation*}
	\NLh v_h(n)=
	\begin{dcases}
		\frac{1}{h^2}\sum_{j=2}^{d}\bigl(2v_h(n)-v_h(n+e_j)-v_h(n-e_j)\bigr)
		& \\
		\quad+
		\frac{1}{h^2}\bigl(v_h(n)-v_h(n+e_1)\bigr), & n_1=1,\\
		\frac{1}{h^2}
		\sum_{j=1}^d\bigl(2v_h(n)-v_h(n+e_j)-v_h(n-e_j)\bigr), & n_1\geq2.
	\end{dcases} 
\end{equation*}
Let $u_h\in\cH_h^+$. Similar to the above, $\even_h \NLh u_h=H_{0,h}\even_h u_h$ and then
\begin{equation}\label{interh2}
	\even_h (\NLh-zI^+_h)^{-1}u_h=(H_{0,h}-zI_h)^{-1}\even_h u_h.
\end{equation}

\begin{remark}
	Since we use homogeneous Dirichlet and Neumann conditions, the discrete Laplacians have a very similar finite difference structure. The discrete Neumann Laplacian only differs from the discrete Dirichlet Laplacian at the indices where $n_1 = 1$. Here the contributions from the boundary conditions either mean that $v_h(n-e_1)=0$ (Dirichlet case) or that $v_h(n-e_1) = v_h(n)$ (Neumann case). This subtle difference also implies the connections to odd and even reflections in \eqref{interh} and \eqref{interh2}.
\end{remark}

\section{Results} \label{sec:results} 

Additional assumptions on the function $\vp_0$ are needed to obtain our results, cf.~\cite[Assumption~2.8]{CGJ} 
or~\cite[Assumption~B]{NT}. Let $\widehat{\vp}_0$ denote the Fourier transform of $\vp_0$, defined as
\begin{equation*}
	\widehat{\vp}_0(\xi)=(2\pi)^{-d/2}\int_{\bR^d}\e^{-\I x\cdot \xi}\vp_0(x)\di x.
\end{equation*}
\begin{assumption}\label{assump2.8}
Let $\vp_0$ satisfy Assumption~\ref{assumpd} and assume that
 $\widehat{\vp}_0$ is essentially bounded.
 Assume there exists $c_0>0$ such that
\begin{equation*}
\supp(\widehat{\vp}_0)\subseteq[-3\pi/2,3\pi/2]^d,
\quad\text{and}\quad
\abs{\widehat{\vp}_0(\xi)}\geq c_0,\quad
\xi\in[-\pi/2,\pi/2]^d.
\end{equation*}
\end{assumption}

\begin{theorem}\label{thm1}
Let $\Jodd$, $\Jeven$, $\Kodd$, and $\Keven$ be as above, with $\vp_0$ satisfying
 Assumption~\ref{assump2.8}.
Let $K\subset\bC\setminus[0,\infty)$ be compact. Then there exists $C>0$ such that
\begin{equation}
\norm{\Jodd(\DLh-zI^+_h)^{-1}\Kodd
-(\DL-zI^+)^{-1}}_{\cB(\cH^+)}\leq C h^2, \label{maind}
\end{equation}
and
\begin{equation}
\norm{\Jeven(\NLh-zI^+_h)^{-1}\Keven
	-(\NL-zI^+)^{-1}}_{\cB(\cH^+)}\leq C h^2, \label{mainn}
\end{equation}
for $0<h\leq1$ and $z\in K$.
\end{theorem}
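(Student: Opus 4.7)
The plan is to reduce both estimates to the known full-space norm resolvent bound
\begin{equation*}
\bigl\lVert J_h(H_{0,h}-zI_h)^{-1}K_h-(H_0-zI)^{-1}\bigr\rVert_{\cB(\cH)}\le C h^2,
\end{equation*}
which was proved in \cite{CGJ} (building on \cite{NT}) under Assumption~\ref{assump2.8}. The bridge is provided by the intertwining identities already established: Lemma~\ref{lemma22} on the continuous side, the identities \eqref{interh} and \eqref{interh2} on the discrete side, and the fact (noted right after \eqref{symd}) that $J_h$ and $K_h$ respect the reflection-odd/even decompositions.

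For the Dirichlet bound \eqref{maind}, I would fix $f\in\cH^+$ and first rewrite the continuous resolvent. Applying $\R$ to \eqref{interd} and using $\R\odd=I^+$ gives
\begin{equation*}
(\DL-zI^+)^{-1}f=\R\,(H_0-zI)^{-1}\odd f.
\end{equation*}
On the discrete side I would use that $K_h\odd f\in\cHroh$, which implies $\odd_h\R_h K_h\odd f=K_h\odd f$ (the operator $\odd_h\R_h$ acts as the identity on $\cHroh$). Combining this with \eqref{interh} yields
\begin{equation*}
\Jodd(\DLh-zI^+_h)^{-1}\Kodd f
=\R J_h\odd_h(\DLh-zI^+_h)^{-1}\R_h K_h\odd f
=\R J_h(H_{0,h}-zI_h)^{-1}K_h\odd f.
\end{equation*}
Subtracting the two displayed expressions factors the difference as
\begin{equation*}
\bigl[\Jodd(\DLh-zI^+_h)^{-1}\Kodd-(\DL-zI^+)^{-1}\bigr]f
=\R\bigl[J_h(H_{0,h}-zI_h)^{-1}K_h-(H_0-zI)^{-1}\bigr]\odd f.
\end{equation*}
Since $\lVert\R\rVert_{\cB(\cH,\cH^+)}\le 1$ and $\lVert\odd f\rVert_{\cH}=\sqrt{2}\,\lVert f\rVert_{\cH^+}$, the full-space estimate with constant $C$ on the compact $K$ (and $0<h\le 1$) directly yields \eqref{maind} with constant $\sqrt{2}\,C$.

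The Neumann bound \eqref{mainn} follows by the same scheme, with $\odd$ replaced by $\even$, $\odd_h$ by $\even_h$, \eqref{interd}/\eqref{interh} by \eqref{intern}/\eqref{interh2}, and the inclusion $K_h\cHre\subseteq\cHreh$ ensuring that $\even_h\R_h K_h\even f=K_h\even f$. Everything is essentially an algebraic identity once the correct parity of the extended objects is tracked, so there is no substantive obstacle: the quadratic rate is inherited from the full-space result, and no new smoothing or boundary estimate is needed. The only point requiring a line of verification is the identity $\odd_h\R_h w_h=w_h$ for $w_h\in\cHroh$ (and its even analogue), which is immediate from the definitions of the extension and restriction operators together with the fact that odd sequences vanish on the hyperplane $\{n_1=0\}$.
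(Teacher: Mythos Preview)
Your proposal is correct and follows essentially the same route as the paper: both reduce the half-space estimate to the full-space bound from \cite{CGJ,NT} by using the intertwining identities \eqref{interd}, \eqref{interh} (respectively \eqref{intern}, \eqref{interh2}) together with the parity-preserving properties of $J_h$ and $K_h$. The only cosmetic difference is that the paper extends the difference back to $\cH$ via $\odd$ and uses the exact relation $\lVert\odd g\rVert_{\cH}^2=2\lVert g\rVert_{\cH^+}^2$ (yielding constant $C$), whereas you stay in $\cH^+$ via $\R$ and use the cruder bound $\lVert\R\rVert\le 1$ (yielding $\sqrt{2}\,C$); this does not affect the argument.
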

\begin{proof}
Let $f\in\cH^+$. Then 
\begin{multline*}
\norm{\Jodd(\DLh-zI^+_h)^{-1}\Kodd f
-(\DL-zI^+)^{-1}f}_{\cH^+}^2\\
=\tfrac{1}{2}\norm{\odd\Jodd(\DLh-zI^+_h)^{-1}\Kodd f
-\odd(\DL-zI^+)^{-1}f}_{\cH}^2.
\end{multline*}
We have $\odd \Jodd=\odd \R J_h\odd_h = J_h\odd_h$, since $J_h\odd_h u_h$ is a reflection-odd function for all $u_h\in \cH_h^+$.
Thus using \eqref{interh} we get
\begin{align*}
\odd\Jodd(\DLh-zI^+_h)^{-1}\Kodd f &=
J_h\odd_h (\DLh-zI^+_h)^{-1}\Kodd f\\
&=J_h(H_{0,h}-zI_h)^{-1}\odd_h\Kodd f.
\end{align*}
Now $\odd_h\Kodd f=\odd_h\R_h K_h \odd f=K_h\odd f$, since $K_h\odd f$ is a reflection-odd sequence. Thus we have shown
\begin{equation*}
\odd\Jodd(\DLh-zI^+_h)^{-1}\Kodd f 
=J_h(H_{0,h}-zI_h)^{-1}K_h\odd f.
\end{equation*}
Using this result together with \eqref{interd} we have shown that
\begin{multline*}
\normb{\bigl(\Jodd(\DLh-zI^+_h)^{-1}\Kodd 
-(\DL-zI^+)^{-1}\bigr)f}_{\cH^+}^2\\
=\tfrac{1}{2}\normb{\bigl(J_h(H_{0,h}-zI_h)^{-1}K_h
-(H_0-zI)^{-1}\bigr)\odd f}_{\cH}^2.
\end{multline*}
Thus we can use the results in~\cite{CGJ} or~\cite{NT} to obtain \eqref{maind}.

To prove \eqref{mainn} note that $\even\Jeven = J_h\even_h$ and $\even_h\Keven = K_h\even$, and then use
 \eqref{intern} instead of \eqref{interd} and \eqref{interh2} instead of \eqref{interh}. This leads to:
\begin{multline*}
	\normb{\bigl(\Jeven(\NLh-zI^+_h)^{-1}\Keven 
		-(\NL-zI^+)^{-1}\bigr)f}_{\cH^+}^2\\
	=\tfrac{1}{2}\normb{\bigl(J_h(H_{0,h}-zI_h)^{-1}K_h
		-(H_0-zI)^{-1}\bigr)\even f}_{\cH}^2,
\end{multline*}
which together with the results in~\cite{CGJ} or~\cite{NT} completes the proof of \eqref{mainn}.
\end{proof}

\subsection{Adding a potential} \label{sec:potential}

Next we add a potential. To obtain the results we introduce two assumptions.
\begin{assumption}\label{assumpdecay}
Let $\tau>d$. Assume that there exists $C>0$ such that
\begin{equation*}
\abs{\vp_0(x)}\leq C (1+\abs{x})^{-\tau},\quad x\in\bR^d.
\end{equation*}
\end{assumption}
\begin{assumption}\label{assumpV}
Let $V\colon \overline{\bR^d_+}\to\bR$ be a bounded function which is uniformly H\"{o}lder continuous of order $\theta\in(0,1]$.
\end{assumption}
Note that $\overline{\bR^d_+}$ denotes the closed half-space, so the conditions hold up to the boundary.

\begin{lemma}
Let $V$ satisfy Assumption~\ref{assumpV}. Then $\even V$ is bounded and uniformly H\"{o}lder continuous of order $\theta$ on $\bR^d$.
\end{lemma}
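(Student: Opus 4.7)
The plan is to verify that $\even V$ inherits both boundedness and the $\theta$-Hölder estimate from $V$ by a direct case analysis based on the sign of the first coordinate. Boundedness is immediate: by construction $\even V(x) \in \{V(x), V(\rfl x)\}$ depending on whether $x_1 \geq 0$ or $x_1 < 0$, so $\|\even V\|_\infty = \|V\|_\infty$.

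For the Hölder estimate, let $C_V>0$ be such that $|V(a)-V(b)| \leq C_V |a-b|^\theta$ for all $a,b\in\overline{\bR^d_+}$, and fix $x,y\in\bR^d$. I would split into three cases. If both $x_1\geq 0$ and $y_1\geq 0$, the bound follows from the Hölder estimate on $V$ applied to $x,y\in\overline{\bR^d_+}$ directly. If both $x_1\leq 0$ and $y_1\leq 0$, then $\even V(x)-\even V(y) = V(\rfl x)-V(\rfl y)$, and since the reflection is an isometry, $|\rfl x - \rfl y| = |x-y|$, so the Hölder bound again transfers.

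The only case with genuine content is the mixed one, say $x_1 \geq 0 \geq y_1$ (the symmetric case is analogous). Here
\begin{equation*}
|\even V(x) - \even V(y)| = |V(x) - V(\rfl y)| \leq C_V |x - \rfl y|^\theta,
\end{equation*}
and the key geometric observation is that $|x-\rfl y| \leq |x-y|$ in this configuration, since
\begin{equation*}
|x-\rfl y|^2 - |x-y|^2 = (x_1+y_1)^2 - (x_1-y_1)^2 = 4 x_1 y_1 \leq 0
\end{equation*}
by the sign assumption. This yields $|\even V(x) - \even V(y)| \leq C_V |x-y|^\theta$, closing the mixed case.

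There is no real obstacle here; the only thing to watch is that the constant $C_V$ transfers without loss, which is clear from the above, and that the three cases exhaust $\bR^d\times\bR^d$ (points with $x_1=0$ can be assigned to either side consistently since $\even V$ is continuous across the hyperplane by construction). Combining the three cases gives the claimed uniform $\theta$-Hölder continuity of $\even V$ on all of $\bR^d$ with constant $C_V$.
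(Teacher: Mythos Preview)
Your proof is correct and follows essentially the same approach as the paper's: the key step in both is the mixed-sign case, handled via the geometric inequality $|x-\rfl{y}|\leq|x-y|$ when $x_1\geq 0\geq y_1$. The paper's proof is simply more compressed, dismissing the same-sign cases as obvious and jumping directly to the mixed case without writing out the computation $4x_1y_1\leq 0$.
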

\begin{proof}
Boundedness is clear, and for the H\"older continuity we only need to consider points $x,y\in\bR^d$ such that $x,\rfl{y}\in\bR^d_+$. Now Assumption~\ref{assumpV} and $\abs{x-\rfl{y}}\leq \abs{x-y}$ imply
\begin{equation*}
\abs{\even V(x)-\even V(y)} = \abs{V(x)-V(\rfl{y})} \leq C\abs{x-\rfl{y}}^\theta \leq C\abs{x-y}^\theta. \qedhere
\end{equation*} 
\end{proof}

We define the discretized potential as $V_h(n)=V(hn)$, $h>0$, $n\in\bZ_+^d$. Then we define $\DLp=\DL+V$ and $\NLp = \NL+V$ on $\cH^+$, and 
$\DLhp=\DLh+V_h$ and $\NLhp=\NLh+V_h$ on $\cH_h^+$.

\begin{theorem} \label{thm:potential}
Let $\Jodd$, $\Jeven$, $\Kodd$, and $\Keven$ be as above, with $\vp_0$ satisfying
 Assumptions~\ref{assump2.8} and~\ref{assumpdecay}. Let $V$ satisfy Assumption~\ref{assumpV}. Define
\begin{equation*}
\frac{1}{\theta'}=\frac{1}{\theta}+\frac{1}{\tau-d}.
\end{equation*}
Let $K\subset\bC\setminus\bR$ be compact. Then there exists $C>0$ such that
\begin{equation}\label{maindV}
\norm{\Jodd(\DLhp-zI^+_h)^{-1}\Kodd
-(\DLp-zI^+)^{-1}}_{\cB(\cH^+)}\leq C h^{\theta'},
\end{equation}
and
\begin{equation}\label{mainnV}
\norm{\Jeven(\NLhp-zI^+_h)^{-1}\Keven
	-(\NLp-zI^+)^{-1}}_{\cB(\cH^+)}\leq C h^{\theta'},
\end{equation}
for $0<h\leq1$ and $z\in K$.
\end{theorem}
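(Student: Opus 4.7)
The plan is to mimic the reduction used in the proof of Theorem \ref{thm1}, so that the half-space estimates with potential reduce to the corresponding full-space estimate, namely the version of the main result of \cite{CGJ} that already accommodates a bounded H\"older continuous potential (the interpolation exponent $\theta' = \theta(\tau-d)/(\theta+\tau-d)$ arises there from trading H\"older regularity of the potential against the spatial decay of $\vp_0$). The natural extension of $V$ to $\bR^d$ is $W := \even V$, which by the lemma just above is bounded and uniformly H\"older continuous of order $\theta$. Setting $W_h(n) := W(hn)$ for $n\in\bZ^d$, one has $W_h(n) = V_h(n)$ whenever $n_1 \geq 1$.

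At the continuous level, direct parity checks give $\odd(Vf) = W\,\odd f$ and $\even(Vf) = W\,\even f$, so Lemma \ref{lemma22} upgrades to
\begin{equation*}
\odd(\DLp - zI^+)^{-1}g = (H_0 + W - zI)^{-1}\odd g,\qquad
\even(\NLp - zI^+)^{-1}g = (H_0 + W - zI)^{-1}\even g,
\end{equation*}
for $z\in\bC\setminus\bR$ and $g\in\cH^+$. Discretely, a case split on $n_1$ shows that the Dirichlet intertwining $\odd_h(V_h u_h) = W_h\,\odd_h u_h$ holds \emph{exactly} (since $\odd_h u_h$ vanishes on the slice $n_1 = 0$ and the odd-even parities match elsewhere), which together with \eqref{interh} yields
\begin{equation*}
\odd_h(\DLhp - zI^+_h)^{-1}u_h = (H_{0,h} + W_h - zI_h)^{-1}\odd_h u_h.
\end{equation*}
Substituting into the chain of equalities from the proof of Theorem \ref{thm1} reduces \eqref{maindV} to the full-space estimate
\begin{equation*}
\normb{J_h(H_{0,h} + W_h - zI_h)^{-1}K_h - (H_0 + W - zI)^{-1}}_{\cB(\cH)} \leq Ch^{\theta'},
\end{equation*}
which is precisely the bound supplied by \cite{CGJ} under Assumptions \ref{assump2.8} and \ref{assumpdecay}.

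The Neumann case carries one extra subtlety: since $\even_h u_h(0, n') = u_h(1, n')$, the identity $\even_h(V_h u_h) = W_h\,\even_h u_h$ fails on the slice $n_1 = 0$, where the left-hand side reads $V(h, hn')u_h(1, n')$ while the right-hand side reads $V(0, hn')u_h(1, n')$. By uniform H\"older continuity of $V$ the discrepancy operator $E_h u_h := \even_h(V_h u_h) - W_h\,\even_h u_h$, supported on $n_1 = 0$, obeys $\norm{E_h}_{\cB(\cH_h^+, \cH_h)} \leq Ch^\theta$, since the mesh-weighted $\ell^2$-norm of any single slice is trivially bounded by the full $\cH_h^+$-norm. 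The resolvent identity
\begin{equation*}
\even_h(\NLhp - zI^+_h)^{-1} = (H_{0,h} + W_h - zI_h)^{-1}\even_h - (H_{0,h} + W_h - zI_h)^{-1}E_h(\NLhp - zI^+_h)^{-1}
\end{equation*}
then splits the Theorem \ref{thm1} scheme into a main term (contributing $O(h^{\theta'})$ via the full-space estimate above) and a sandwich term (contributing $O(h^\theta)$ uniformly on $K$, thanks to $\norm{(A - z)^{-1}} \leq (\dist(K, \bR))^{-1}$ applied to the two self-adjoint operators). Since $\theta' \leq \theta$, both terms are absorbed into the desired rate in \eqref{mainnV}. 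The main obstacle is precisely handling this Neumann boundary mismatch at $n_1 = 0$; once it is controlled by the resolvent identity above, the rest is bookkeeping that mirrors the proof of Theorem \ref{thm1} line by line.
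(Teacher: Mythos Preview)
Your proof is correct and follows essentially the same strategy as the paper: reduce to the full-space Schr\"odinger estimate from \cite{CGJ} via the reflection intertwiners, extending $V$ by $\even V$, and absorb an $O(h^\theta)$ boundary-slice mismatch at $n_1=0$ using a resolvent identity. The only cosmetic difference is that the paper works with the discrete potential $\even_h V_h$ (so that both discrete intertwinings are exact) and then replaces it by $(\even V)_h$ at the cost of $O(h^\theta)$, whereas you use $(\even V)_h$ from the outset, which makes the Dirichlet intertwining exact for free and pushes the $O(h^\theta)$ correction entirely into the Neumann case; either bookkeeping leads to the same estimate.
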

\begin{proof}
Let $f\in\cH^+$ and $x\in\bR^d_+$. Then 
\begin{equation*}
\odd (Vf)(x)=V(x)f(x)=(\even V)(x)(\odd f)(x),
\end{equation*}
and
\begin{equation*}
\odd (Vf)(\rfl{x})=-(Vf)(x)=V(x)(-f(x))
=(\even V)(\rfl{x})(\odd f)(\rfl{x}).
\end{equation*}
Thus we have $\odd V=(\even V)\odd$ as operators from $\cH^+$ to $\cH$, 
where $\even V$ denotes the operator of multiplication in $\cH$ by
$(\even V)(x)$, $x\in\bR^d$.
Let $H=H_0+\even V$ on
$\cH$. Then combining the above result with
the arguments leading to~\eqref{interd} we get for $f\in\cH^+$
\begin{equation*}
\odd (\DLp-zI^+)^{-1}f=(H-zI)^{-1}\odd f.
\end{equation*}
We can repeat these arguments in the discrete case, leading to
\begin{equation*}
\odd_h (\DLhp-zI^+_h)^{-1}u_h=(H_h-zI_h)^{-1}\odd_h u_h
\end{equation*}
for $u_h\in \cH_h^+$. Here we have defined $H_h=H_{0,h}+\even_h V_h$ on
$\cH_h$. Note that $\even_h V_h$ and $(\even V)_h$ may differ only at $n_1=0$. Thus replacing $\even_h V_h$ by $(\even V)_h$ introduces an error of order $h^{\theta}$, due to Assumption~\ref{assumpV}, and this error can be absorbed in the final estimate below.

Repeating the computations in the proof of Theorem~\ref{thm1} we get
for $f\in\cH^+$
\begin{multline*}
\normb{\bigl(\Jodd(\DLhp-zI^+_h)^{-1}\Kodd 
-(\DLp-zI^+)^{-1}\bigr)f}_{\cH^+}^2\\
=\tfrac{1}{2}\normb{\bigl(J_h(H_{h}-zI_h)^{-1}K_h
-(H-zI)^{-1}\bigr)\odd f}_{\cH}^2.
\end{multline*}
We can then use~\cite[Theorem~4.4]{CGJ} to complete the proof. 

The proof for the Neumann Laplacian is analogous, using instead that $\even (Vf) = (\even V)(\even f)$, which for $f\in\cH^+$ and $u_h\in \cH^+_h$ gives
\begin{align*}
	\even (\NLp-zI^+)^{-1}f &= (H-zI)^{-1}\even f, \\
	\even_h (\NLhp-zI^+_h)^{-1}u_h &= (H_h-zI_h)^{-1}\even_h u_h.
\end{align*}
This leads to
\begin{multline*}
	\normb{\bigl(\Jeven(\NLhp-zI^+_h)^{-1}\Keven 
		-(\NLp-zI^+)^{-1}\bigr)f}_{\cH^+}^2\\
	=\tfrac{1}{2}\normb{\bigl(J_h(H_{h}-zI_h)^{-1}K_h
		-(H-zI)^{-1}\bigr)\even f}_{\cH}^2,
\end{multline*}
which can also be estimated by~\cite[Theorem~4.4]{CGJ}.
\end{proof}

\subsection{Functions of Dirichlet and Neumann Laplacians} \label{sec:functionalcalc}

Now we extend the approximation results given in Theorem~\ref{thm1} to functions of the Dirichlet and Neumann Laplacians on the half-space. Let $\Psi\colon [0,\infty)\to\bR$ be a Borel function. Using the functional calculus we can define the operators $\Psi(\DL)$, $\Psi(\DLh)$, $\Psi(\NL)$, and $\Psi(\NLh)$. 

We need the following lemma, which is an immediate consequence of~\cite[Proposition~5.15]{Sch}; see also \cite{MS}. For operators $S$ and $T$ the notation $S\subset T$ means that $T$ is an extension of $S$.
\begin{lemma}\label{lemma41}
For $j=1,2$ assume that $A_j$ is a self-adjoint operator on a Hilbert space $\cH_j$. Assume that $B\colon \cH_1\to\cH_2$ is a bounded operator such that
\begin{equation*}
BA_1\subset A_2B.
\end{equation*}
Let $\Psi$ be a Borel function on $\bR$. Then we have
\begin{equation}\label{eq42}
B\Psi(A_1)\subset \Psi(A_2)B.
\end{equation}
If $\Psi$ is a bounded function then equality holds in~\eqref{eq42}.
\end{lemma}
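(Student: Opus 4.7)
The plan is to reduce the general statement to a statement about resolvents, then bootstrap to bounded Borel functions via the functional calculus, and finally handle the unbounded case by truncation and closedness. Throughout, write $I_j$ for the identity on $\cH_j$.

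First I would show that the intertwining $BA_1 \subset A_2 B$ upgrades to the resolvent identity
\begin{equation*}
B(A_1-zI_1)^{-1} = (A_2-zI_2)^{-1}B, \qquad z\in\bC\setminus\bR.
\end{equation*}
Indeed, for $f\in\cH_1$ and $g=(A_1-zI_1)^{-1}f\in\cD(A_1)$, the hypothesis yields $Bg\in\cD(A_2)$ and $A_2 Bg = B A_1 g$, so $(A_2-zI_2)Bg = B(A_1-zI_1)g = Bf$, which gives the claimed identity on all of $\cH_1$.

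Next I would extend from resolvents to all bounded Borel functions. Let $\mathcal{M}$ be the class of bounded Borel $\Psi\colon\bR\to\bC$ for which $B\Psi(A_1) = \Psi(A_2)B$. It is a vector space, stable under products (by multiplicativity of the bounded functional calculus), and stable under bounded pointwise limits: if $\Psi_n\to\Psi$ pointwise with $\sup_n\norm{\Psi_n}_\infty<\infty$, then the bounded convergence theorem for the spectral measures gives $\Psi_n(A_j)\to\Psi(A_j)$ in the strong operator topology, and continuity of $B$ then passes the identity to the limit. Since $\mathcal{M}$ already contains the family $\{(\lambda-z)^{-1} : z\in\bC\setminus\bR\}$, a monotone class argument---or equivalently Stone's formula, which reconstructs the spectral projections $E_{A_j}(\Delta)$ as strong limits of linear combinations of resolvents---shows that $\mathcal{M}$ contains every bounded Borel function. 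This already gives the equality asserted for bounded $\Psi$.

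For a general Borel $\Psi$ I would truncate as $\Psi_n = \Psi\,\chi_{\{|\Psi|\le n\}}$. For $f\in\cD(\Psi(A_1))$, the bounded case yields $B\Psi_n(A_1)f = \Psi_n(A_2)Bf$. Dominated convergence in the spectral measure of $A_1$ at $f$ gives $\Psi_n(A_1)f\to\Psi(A_1)f$ in $\cH_1$, so the left side converges to $B\Psi(A_1)f$. For the right side, the uniform bound $\norm{\Psi_n(A_2)Bf}\le\norm{B}\,\norm{\Psi_n(A_1)f}$ combined with Fatou's lemma applied to the spectral measure of $A_2$ at $Bf$ yields $Bf\in\cD(\Psi(A_2))$; dominated convergence then gives $\Psi_n(A_2)Bf\to\Psi(A_2)Bf$, and the limits must coincide. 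The main obstacle is precisely this last step: one has to verify that $Bf$ lies in $\cD(\Psi(A_2))$ before one can even speak of $\Psi(A_2)Bf$, and the uniform bound coming from boundedness of $B$ together with Fatou is what makes this possible. The monotone-class step is also somewhat delicate, but can be bypassed by the Stone-formula route, which only requires checking strong operator convergence under the bounded operator $B$.
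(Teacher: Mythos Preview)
Your argument is correct. The resolvent intertwining is derived cleanly, the extension to bounded Borel $\Psi$ via the algebra $\mathcal{M}$ and bounded pointwise limits (equivalently, Stone's formula) is standard and valid, and the truncation step is handled carefully: the equality $\Psi_n(A_2)Bf = B\Psi_n(A_1)f$ gives $\sup_n\norm{\Psi_n(A_2)Bf}<\infty$, and since $\abs{\Psi_n}^2\uparrow\abs{\Psi}^2$ pointwise, monotone convergence (your ``Fatou'') in the spectral measure of $A_2$ at $Bf$ yields $Bf\in\cD(\Psi(A_2))$, after which dominated convergence closes the argument.

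Regarding comparison with the paper: the paper does not actually prove this lemma. It is stated as ``an immediate consequence of \cite[Proposition~5.15]{Sch}'' and no argument is given. What you have written is essentially the standard proof one would extract from that reference, so there is no meaningful methodological difference to discuss---you have simply supplied the details the paper chose to outsource.
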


In the following assumption the parameters are chosen to be compatible with the ones in \cite[Assumption~3.1]{CGJ}.
 
\begin{assumption}\label{assumpPsi}
Assume
\begin{equation*}
\alpha>\tfrac12,\quad \beta> -\tfrac12,\quad\text{and}\quad
\alpha\leq1+\beta<2\alpha\leq3+\beta.
\end{equation*}
Let $\Psi\colon [0,\infty)\to\bR$ be a continuous function which is continuously differentiable on $(0,\infty)$ and satisfies the following conditions:
\begin{enumerate}
\item[(1)] $\Psi(0)=0$,
\item[(2)] there exist $c_0>0$ and $c_1>0$ such that 
$\Psi(\lambda)\geq c_0\lambda^{\alpha/2}$ for $\lambda\geq c_1$.
\item[(3)] there exists $c>0$ such that $\abs{\Psi'(\lambda)}\leq c
\lambda^{(\beta-1)/2}$ for $\lambda>0$.
\end{enumerate}
\end{assumption}

We omit the straightforward proof of the following lemma.
\begin{lemma}\label{lemmaPsi}
Let $\Psi$ satisfy Assumption~\ref{assumpPsi} with parameters $\alpha$ and $\beta$. Define $G_0(\xi)=\Psi(\abs{\xi}^2)$, $\xi\in\bR^d$. Then $G_0$ satisfies Assumption~\textup{3.1} in~\textup{\cite{CGJ}} with the same parameters $\alpha$ and $\beta$.
\end{lemma}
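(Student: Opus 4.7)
The plan is to verify, item by item, that the three structural conditions comprising \cite[Assumption~3.1]{CGJ} on the Fourier symbol of a multiplier are inherited by $G_0(\xi)=\Psi(\abs{\xi}^2)$ from the analogous conditions on $\Psi$ in Assumption~\ref{assumpPsi}. Since the parameters $\alpha$ and $\beta$ have been arranged by the authors to coincide literally with those in \cite{CGJ}, no rebalancing of exponents is needed, and the proof reduces to a direct substitution together with one use of the chain rule.

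First I would record the regularity of $G_0$. The map $\xi\mapsto \abs{\xi}^2$ is smooth on $\bR^d$ and takes values in $[0,\infty)$; composing with $\Psi\in C([0,\infty))\cap C^1((0,\infty))$ then shows that $G_0\in C(\bR^d)\cap C^1(\bR^d\setminus\{0\})$, which supplies the basic continuity/differentiability hypothesis. The vanishing at the origin, $G_0(0)=\Psi(0)=0$, is immediate from condition~(1).

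Next I would transfer the growth lower bound. Substituting $\lambda=\abs{\xi}^2$ in condition~(2) gives $G_0(\xi)\geq c_0\abs{\xi}^{\alpha}$ whenever $\abs{\xi}\geq\sqrt{c_1}$, with the same constant $c_0$ and the threshold shifted to $\sqrt{c_1}$. For the gradient bound, the chain rule yields $\nabla G_0(\xi)=2\xi\,\Psi'(\abs{\xi}^2)$ for $\xi\neq0$, so condition~(3) produces $\abs{\nabla G_0(\xi)}\leq 2c\abs{\xi}\cdot \abs{\xi}^{\beta-1}=2c\abs{\xi}^{\beta}$, which is the required derivative estimate with constant $2c$.

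I do not anticipate any real obstacle; this is a bookkeeping verification. The one subtlety worth flagging is that when $\beta<1$ the gradient bound is not uniform near the origin, but this is exactly the regime that \cite[Assumption~3.1]{CGJ} is designed to allow, which is why the admissible range $\beta>-\tfrac12$ (rather than $\beta\geq 1$) causes no problem in the sequel.
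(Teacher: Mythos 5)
Your verification is correct, and it is precisely the ``straightforward proof'' that the paper omits: substituting $\lambda=\abs{\xi}^2$ transfers $\Psi(0)=0$, the lower bound $\Psi(\lambda)\geq c_0\lambda^{\alpha/2}$ (giving $G_0(\xi)\geq c_0\abs{\xi}^{\alpha}$ for $\abs{\xi}\geq\sqrt{c_1}$), and via the chain rule the derivative bound $\abs{\nabla G_0(\xi)}\leq 2c\abs{\xi}^{\beta}$, with $G_0\in C(\bR^d)\cap C^1(\bR^d\setminus\{0\})$ as required. Nothing further is needed, and your remark about the non-uniformity of the gradient bound near the origin for $\beta<1$ correctly identifies why the composition need only be $C^1$ away from $\xi=0$.
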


Next we define
\begin{equation*}
G_{0,h}(\xi)=G_0(\tfrac{2}{h}\sin(\tfrac{h}{2}\xi_1),
\tfrac{2}{h}\sin(\tfrac{h}{2}\xi_2),\ldots,
\tfrac{2}{h}\sin(\tfrac{h}{2}\xi_d)),\quad h>0,\quad \xi\in\bR^d.
\end{equation*}
Using these definitions it follows that $\Psi(H_0)$ is the Fourier multiplier with symbol $G_0$ on $\cH$, and $\Psi(H_{0,h})$ is the Fourier multiplier with symbol $G_{0,h}$ on $\cH_h$. The operators $\Psi(\DL)$ and $\Psi(\NL)$ on $\cH^+$, and the operators
$\Psi(\DLh)$ and $\Psi(\NLh)$ on $\cH^+_h$, are defined using the functional calculus.

We have the following extension of Theorem~\ref{thm1}.
\begin{theorem}\label{thm2}
Let $\Psi$ satisfy Assumption~\ref{assumpPsi} with parameters $\alpha$ and $\beta$. Let
\begin{equation*}
	\gamma=\min\{2\alpha-1,2\alpha-\beta-1\}.
\end{equation*}
Let $\Jodd$, $\Jeven$, $\Kodd$, and $\Keven$ be as above, with $\vp_0$ satisfying Assumption~\ref{assump2.8}. Let $K\subset\bC\setminus[0,\infty)$ be compact. Then there exists $C>0$ such that
\begin{equation}
	\norm{\Jodd(\Psi(\DLh)-zI^+_h)^{-1}\Kodd
-(\Psi(\DL)-zI^+)^{-1}}_{\cB(\cH^+)}\leq C h^{\gamma}, \label{mainda}
\end{equation}
and
\begin{equation}
	\norm{\Jeven(\Psi(\NLh)-zI^+_h)^{-1}\Keven
	-(\Psi(\NL)-zI^+)^{-1}}_{\cB(\cH^+)}\leq C h^{\gamma}, \label{mainna}
\end{equation}
for $0<h\leq1$ and $z\in K$.
\end{theorem}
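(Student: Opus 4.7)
The plan is to mirror the argument in the proof of Theorem~\ref{thm1}, but lifted through the functional calculus by means of Lemma~\ref{lemma41}, and then to reduce the estimate to a full-space Fourier multiplier bound already available in~\cite{CGJ}.

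First I would promote the Laplacian intertwining relations of Lemma~\ref{lemma22} to intertwinings for $\Psi$ applied to the Laplacians. Since $\odd\colon\cH^+\to\cH$ is bounded (with $\norm{\odd f}_{\cH}^2=2\norm{f}_{\cH^+}^2$) and $\odd\,\DL\subset H_0\,\odd$ by Lemma~\ref{lemma22}(i), Lemma~\ref{lemma41} applied to the bounded Borel function $\lambda\mapsto(\Psi(\lambda)-z)^{-1}$ yields
\begin{equation*}
\odd\,(\Psi(\DL)-zI^+)^{-1}=(\Psi(H_0)-zI)^{-1}\,\odd.
\end{equation*}
A parallel argument using Lemma~\ref{lemma22}(ii) gives the corresponding identity with $\even$, $\NL$, and $H_0$. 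In the discrete setting, the identities~\eqref{interh} and~\eqref{interh2} combined with Lemma~\ref{lemma41} give
\begin{equation*}
\odd_h(\Psi(\DLh)-zI^+_h)^{-1}=(\Psi(H_{0,h})-zI_h)^{-1}\,\odd_h
\end{equation*}
and the analogous formula with $\even_h$ and $\NLh$.

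The algebraic reduction in the proof of Theorem~\ref{thm1} now carries over verbatim. Combining $\odd\,\Jodd=J_h\,\odd_h$, $\odd_h\Kodd=K_h\,\odd$, the intertwinings above, and the norm identity $\norm{\odd g}_{\cH}^2=2\norm{g}_{\cH^+}^2$ (and its analog for $\even$) gives, for $f\in\cH^+$,
\begin{multline*}
\normb{\bigl(\Jodd(\Psi(\DLh)-zI^+_h)^{-1}\Kodd-(\Psi(\DL)-zI^+)^{-1}\bigr)f}_{\cH^+}^2\\
=\tfrac12\normb{\bigl(J_h(\Psi(H_{0,h})-zI_h)^{-1}K_h-(\Psi(H_0)-zI)^{-1}\bigr)\odd f}_{\cH}^2,
\end{multline*}
and the analogous identity for the Neumann case with $\even f$ replacing $\odd f$.

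It then remains to bound the right-hand sides by $Ch^{\gamma}\norm{f}_{\cH^+}$. By Lemma~\ref{lemmaPsi} the symbol $G_0(\xi)=\Psi(\abs{\xi}^2)$ satisfies the hypotheses of~\cite[Assumption~3.1]{CGJ}, so the corresponding Fourier multiplier estimate from that paper directly yields the stated rate $\gamma=\min\{2\alpha-1,2\alpha-\beta-1\}$. I anticipate the main delicate point to be the verification that $z\in K$ lies uniformly in the resolvent set of each of $\Psi(\DL)$, $\Psi(\NL)$, $\Psi(\DLh)$, $\Psi(\NLh)$, so that $\lambda\mapsto(\Psi(\lambda)-z)^{-1}$ is a bounded Borel function to which Lemma~\ref{lemma41} can be applied; this reduces via the spectral mapping theorem to the behavior of $\Psi$ on $[0,\infty)$ and on the spectrum of the finite-difference multiplier, which is exactly what is assumed in the full-space setting of~\cite{CGJ}.
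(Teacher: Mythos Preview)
Your proposal is correct and follows essentially the same route as the paper: lift the intertwining relations of Lemma~\ref{lemma22} and \eqref{interh}--\eqref{interh2} to $(\Psi(\cdot)-z)^{-1}$ via Lemma~\ref{lemma41}, then repeat the algebraic reduction from Theorem~\ref{thm1} to land on the full-space estimate, which the paper cites as \cite[Proposition~3.5]{CGJ} after invoking Lemma~\ref{lemmaPsi}. The resolvent-set concern you flag is not addressed separately in the paper either; it is absorbed into the standing hypotheses on $\Psi$ and $K$ inherited from the full-space setting in~\cite{CGJ}.
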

\begin{proof}
We prove the result for the Dirichlet Laplacians.
Assumption~\ref{assumpPsi} and Lemma~\ref{lemmaPsi} together with \cite[Proposition~3.5]{CGJ} imply that we have the estimate
\begin{equation}\label{CGJ-est}
\norm{
J_h(\Psi(H_{0,h})-zI_h)^{-1}K_h-(\Psi(H_0)-zI)^{-1}
}_{\cB(\cH)}\leq C h^{\gamma}
\end{equation}
for $0<h\leq1$ and $z\in K$, with $K$ satisfying the assumption in the theorem. 

Combine Lemma~\ref{lemma22} with Lemma~\ref{lemma41} to get the result
\begin{equation}\label{interda}
\cO(\Psi(\DL)-zI^+)^{-1}=(\Psi(H_0)-zI)^{-1}\cO,\quad z\in K.
\end{equation}
Analogously, using \eqref{interh} and Lemma~\ref{lemma41} we get
\begin{equation}\label{interdah}
\cO_h(\Psi(\DLh)-zI^+_h)^{-1}=(\Psi(H_{0,h})-zI_h)^{-1}\cO_h,
\quad z\in K.
\end{equation}
Using the results \eqref{CGJ-est}--\eqref{interdah} we can repeat the arguments in the proof of Theorem~\ref{thm1} to get the result in the Dirichlet case. The proof in the Neumann case is almost the same, so we omit it.
\end{proof}

\begin{remark}
	By repeating the proof of Theorem~\ref{thm:potential}, we may also add a potential $V$ to the operators $\Psi(\DL)$ and $\Psi(\NL)$ and add a discrete potential $V_h$ to the operators $\Psi(\DLh)$ and $\Psi(\NLh)$. The resulting estimates, replacing those in \eqref{mainda} and \eqref{mainna}, will have the rate $h^{\min\{\gamma,\theta'\}}$.
\end{remark}

Of particular interest are the functions $\Psi$ that give the powers of the Laplacian $H_0$.
Let $s>0$ and define $\Psi_s(\lambda)=\lambda^{s/2}$, $\lambda\geq0$. Then $G_0(\xi)=\abs{\xi}^s$ and $\Psi_s(H_0)=(-\Delta)^{s/2}$. For $s\geq2$ we can take $\alpha=(s+2)/2$ and $\beta=s-1$ to satisfy the conditions in Assumption~\ref{assumpPsi}. Then the estimate~\eqref{CGJ-est} holds with $\gamma=2$.

For $\frac12<s<2$ the conditions in Assumption~\ref{assumpPsi} are satisfied with $\alpha=s$ and $\beta=s-1$. We get $\gamma=s$ for $1\leq s<2$. For $0<s<1$ we can use the result in \cite[Proposition~3.11]{CGJ} which yields the estimate~\eqref{CGJ-est} for $\Psi_s(H_0)$ and $\Psi_s(H_{0,h})$ with $\gamma=s$.

We summarize the results above as a Corollary to both Theorem~\ref{thm2} and the results in~\cite{CGJ}.

\begin{corollary}
Let $\Psi_s(\lambda)=\lambda^{s/2}$, $\lambda\geq0$, $s>0$. Then the estimates~\eqref{mainda} and~\eqref{mainna} hold for $\gamma=\min\{s,2\}$.
\end{corollary}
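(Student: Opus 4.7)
The plan is to split into three ranges of $s$ and either invoke Theorem~\ref{thm2} directly or rerun its reflection reduction with a full-space estimate taken from~\cite{CGJ}.

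First, for $s \geq 2$ I will take $\alpha = (s+2)/2$ and $\beta = s-1$, at which point Assumption~\ref{assumpPsi} reduces to routine inequalities ($\alpha = 1+\beta$, $1+\beta = s < s+2 = 2\alpha$, $2\alpha = 3+\beta$) and conditions~(1)--(3) for $\Psi_s$ are immediate since $s \geq \alpha$ and $\Psi_s'(\lambda) = (s/2)\lambda^{(s-2)/2}$. Feeding this into Theorem~\ref{thm2} delivers $\gamma = \min\{s+1, 2\} = 2$.

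For $1 \leq s < 2$, the same check with $\alpha = s$ and $\beta = s-1$ again satisfies Assumption~\ref{assumpPsi}, and Theorem~\ref{thm2} gives $\gamma = \min\{2s-1, s\} = s$.

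The remaining range $0 < s < 1$ is the main obstacle: condition~(2) of Assumption~\ref{assumpPsi} forces $\alpha \leq s$, which clashes with $\alpha > \tfrac12$ once $s \leq \tfrac12$, and even on $(\tfrac12, 1)$ the optimal $\alpha = s$ only yields $\gamma = 2s-1 < s$. I will therefore bypass Theorem~\ref{thm2} and appeal directly to \cite[Proposition~3.11]{CGJ}, which supplies the full-space estimate~\eqref{CGJ-est} with rate $h^s$ for $\Psi_s(H_0)$ and $\Psi_s(H_{0,h})$. The intertwining identities~\eqref{interda} and~\eqref{interdah} for $\Psi_s$ follow from Lemmas~\ref{lemma22} and~\ref{lemma41} for any Borel function, so the reflection reduction used in the proof of Theorem~\ref{thm2} transfers verbatim: the half-space errors in~\eqref{mainda} and~\eqref{mainna} equal $\tfrac{1}{\sqrt{2}}$ times the corresponding full-space error applied to $\odd f$ or $\even f$, and the CGJ bound then closes the argument with $\gamma = s$. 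Combining the three cases yields $\gamma = \min\{s, 2\}$ throughout.
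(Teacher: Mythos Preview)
Your proof is correct and follows essentially the same approach as the paper: the same parameter choices $\alpha=(s+2)/2$, $\beta=s-1$ for $s\geq2$ and $\alpha=s$, $\beta=s-1$ for $1\leq s<2$ feed into Theorem~\ref{thm2}, while for $0<s<1$ the paper likewise bypasses Assumption~\ref{assumpPsi} and invokes \cite[Proposition~3.11]{CGJ} for the full-space bound together with the reflection reduction. The paper's discussion precedes the Corollary as motivation rather than appearing as a formal proof, but the content is the same as yours.
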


\begin{remark}\label{rem}
The operators $\Psi_s(\DL)$ defined here do not agree with the fractional Dirichlet Laplacians on a half-space defined in~\cite{GG21,GG22}.
Let $u\in\cD(\Psi_s(\DL))$, then Lemmas~\ref{lemma22} and~\ref{lemma41} imply $\odd\Psi_s(\DL) u= \Psi_s(H_0)\odd u$, such that
$\Psi_s(\DL) u=\R \Psi_s(H_0)\odd u$. Whereas in~\cite{GG21,GG22} the definition is based on the operator $\R \Psi_s(H_0)E_0$ applied to suitable functions in $\cH^+$, where $E_0$ is the operator for extension by zero. Hence the two approaches differ by the type of extension operator that is used.
\end{remark} 

\paragraph*{Acknowledgments.} This research is partially supported by grant 8021--00084B from Independent Research Fund Denmark \textbar\ Natural Sciences.

\end{document}